\newcommand{\eps}{\varepsilon}
\renewcommand{\D}{\mathscr D}
\renewcommand{\A}{\mathcal A}
\newcommand{\B}{\mathcal B}
\newcommand{\restrict}{\upharpoonright}
\newcommand{\myUrl}[1]{\begin{center}
{\small\url{#1}} 
\end{center}}
\newtheorem{thm}{Theorem}
\newtheorem{lem}[thm]{Lemma}
\newtheorem{pro}[thm]{Proposition}
\theoremstyle{definition}
\newtheorem{df}[thm]{Definition}
\theoremstyle{remark}
\newtheorem{que}{Question}
\begin{document}
	\title{
		Permutations of the integers induce only the
		trivial automorphism of the Turing degrees
	}
	\author{
		Bj{\o}rn Kjos-Hanssen\\ Department of Mathematics\\ University of Hawai\textquoteleft i at M\=anoa\\ {\tt bjoern.kjos-hanssen@hawaii.edu}\footnote{
			This work was partially supported by
			a grant from the Simons Foundation (\#315188 to Bj\o rn Kjos-Hanssen).
			The author acknowledges the support of the Institut f\"ur Informatik at the University of Heidelberg, Germany
			during the workshop on \emph{Computability and Randomness}, June 15 -- July 9, 2015.
		}
	}
	\maketitle{}
	\begin{abstract}
		Let $\pi$ be an automorphism of the Turing degrees induces by
		a homeomorphism $\varphi$ of the Cantor space $2^\omega$ such that $\varphi$ preserves all Bernoulli measures.
		It is proved that $\pi$ must be trivial.
		In particular, a permutation of $\omega$ can only induce the trivial automorphism of the Turing degrees.
	\end{abstract}
	\tableofcontents
	\newpage
	\section{Introduction}
	Let $\mathscr D_{\mathrm{T}}$ denote the set of Turing degrees and let $\le$ denote its ordering.
	This article gives a partial answer to the following famous question.
	\begin{que}\label{rigid}
		Does there exist a nontrivial automorphism of $\mathscr D_{\mathrm{T}}$?
	\end{que}
	\begin{df}
		A bijection $\pi:\mathscr D_{\mathrm{T}}\to\mathscr D_{\mathrm{T}}$ is an \emph{automorphism} of $\mathscr D_{\mathrm{T}}$ if
		for all $\mathbf x, \mathbf y\in\mathscr D_{\mathrm{T}}$, $\mathbf x\le\mathbf y$ iff $\pi(\mathbf x)\le\pi(\mathbf y)$.
		If moreover there exists an $\mathbf x$ with $\pi(\mathbf x)\ne\mathbf x$ then $\pi$ is \emph{nontrivial}.
	\end{df}
	Question \ref{rigid} has a long history.
	Already in 1977, Jockusch and Solovay \cite{MR0432434} showed that each jump-preserving automorphism of the Turing degrees is the identity above $\mathbf 0^{(4)}$.
	Nerode and Shore 1980 \cite{Nerode.Shore:80} showed that
	each automorphism (not necessarily jump-preserving) is equal to the identity on some cone.
	Slaman and Woodin \cite{Slaman.Woodin:08}
	showed that each automorphism is equal to the identity on the cone above $\mathbf 0''$.

	Haught and Slaman \cite{Haught.Slaman:97} used permutations of the integers to obtain
	automorphisms of the polynomial-time Turing degrees in an ideal (below a fixed set).
	\begin{thm}[Haught and Slaman \cite{Haught.Slaman:97}]
		There is a permutation of $2^{<\omega}$, or equivalently of $\omega$, that induces a nontrivial automorphism of
		\[
			(\mathsf{PTIME}^A,\le_{\mathrm{pT}}).
		\]
		for some $A$.
	\end{thm}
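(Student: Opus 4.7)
The plan is to build a permutation $\pi$ of $2^{<\omega}$ and an oracle $A$ simultaneously, so that the natural action $B\mapsto\pi(B):=\{\pi(n):n\in B\}$ descends to a nontrivial automorphism of $(\mathsf{PTIME}^A,\le_{\mathrm{pT}})$. The starting observation is that if $\pi$ and $\pi^{-1}$ are both bounded in length by a polynomial (that is, $|\pi(n)|\le p(|n|)$ and $|\pi^{-1}(n)|\le p(|n|)$ for some polynomial $p$), then $B\equiv_{\mathrm{pT}}\pi(B)$ for every $B$ and the induced map is trivial. Non-triviality therefore requires $\pi$ to stretch input lengths super-polynomially on some sparse set.

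First I would fix the combinatorial shape of $\pi$. Pick two sparse sequences $(a_k)$ and $(b_k)$ in $2^{<\omega}$ with $|b_k|$ super-polynomial in $|a_k|$, declare $\pi$ to swap $a_k\leftrightarrow b_k$ for each $k$, and let $\pi$ act as the identity elsewhere. Then $\pi$ is polynomial-time computable without oracle, yet the corresponding action on sets shifts information across length scales, so that $B$ and $\pi(B)$ need not be polynomial-time equivalent.

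Second I would construct $A$ by a finite-extension argument with two families of requirements. The preservation requirements demand that whenever a polynomial-time machine $M_e^C$ witnesses $B\le_{\mathrm{pT}}^A C$, a translated machine $M_{e'}^{\pi(C)}$ witnesses $\pi(B)\le_{\mathrm{pT}}^A\pi(C)$; these are satisfied by coding enough bookkeeping into $A$ that queries can be relabelled along $\pi$ and $\pi^{-1}$ in polynomial time on the tame region where $\pi$ does not stretch lengths. The diagonalization requirements demand that, for a distinguished set $B^\ast\le_{\mathrm{pT}}A$, no polynomial-time $A$-reduction computes $B^\ast$ from oracle $\pi(B^\ast)$; these are met by exploiting the super-polynomial growth of $|\pi(a_k)|=|b_k|$ as a function of $|a_k|$, since any fixed polynomial-time reduction running in time polynomial in $|a_k|$ cannot query $\pi(B^\ast)$ at the super-polynomially longer string $b_k$ where the relevant bit is stored.

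The main obstacle is the tension between these two families: preservation needs the relabelling $n\leftrightarrow\pi(n)$ to be polynomial-time uniform, whereas diagonalization exploits precisely the points where $\pi$ is not polynomially bounded. The resolution is to separate the two by assigning each preservation requirement a reserved polynomial-length window and each diagonalization requirement a reserved sparse position outside every such window. Because windows are polynomial and diagonal positions are super-polynomial, both families coexist with finite injury. Once this is in place, order-preservation and non-triviality reduce to bookkeeping, and the resulting $\pi$ induces a nontrivial automorphism of $(\mathsf{PTIME}^A,\le_{\mathrm{pT}})$.
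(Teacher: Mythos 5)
The paper does not prove this theorem: it is stated as background and attributed to Haught and Slaman~\cite{Haught.Slaman:97}, with the remainder of the paper going in a quite different direction (Turing degrees rather than p-time degrees). So there is no proof in the paper to compare against, and your sketch has to be judged on its own merits.

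Your high-level plan — swap a sparse sequence $a_k\leftrightarrow b_k$ with $|b_k|$ super-polynomial in $|a_k|$, then build $A$ by finite extension with preservation and diagonalization requirements — is a plausible outline, and you correctly identify that some super-polynomial length stretch is forced and that the tension between preservation and diagonalization is where the real work lies. However, two points in the sketch are genuinely problematic rather than merely elided. First, the claim that ``$\pi$ is polynomial-time computable without oracle'' is false as stated: if $|\pi(a_k)|=|b_k|$ is super-polynomial in $|a_k|$, then $\pi$ cannot even write its output in polynomial time. What you can ask for is that the graph of $\pi$ be polynomial-time decidable, but you need to say that, because it changes which reductions $\pi$ helps build. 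Second, and more seriously, you never address the threshold requirement that $\pi$ maps $\mathsf{PTIME}^A$ into $\mathsf{PTIME}^A$ at all. If $B\le_{\mathrm{pT}}A$ and you want $\pi(B)\le_{\mathrm{pT}}A$, you must compute $\pi(B)(x)=B(\pi^{-1}(x))$ in time polynomial in $|x|$; but at the swapped positions $|\pi^{-1}(a_k)|=|b_k|$ is super-polynomial in $|a_k|$, so you cannot even name the string you would need to query, regardless of the running time of the reduction from $B$ to $A$. Your ``reserved polynomial windows for preservation, reserved super-polynomial slots for diagonalization'' picture only works if all of the interesting content of every set in $\mathsf{PTIME}^A$ stays inside the tame windows, which is precisely what needs to be arranged; as written it is a restatement of the difficulty, not a resolution of it. The actual Haught--Slaman construction is substantially more delicate on exactly this point (the oracle $A$ is built so that the ideal below it is closed under the action of $\pi$ and the action is order-invariant on it), and the sketch as given does not yet contain the idea that makes that possible.
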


			Our result can be seen as a contrast to the following work of Kent.
			\begin{df}
				$A\subset \omega$ is \emph{cohesive} if
				for each recursively enumerable set $W_e$, either $A\cap W_e$ is finite or $A\cap(\omega\setminus W_e)$ is finite.
			\end{df}

			\begin{thm}[Kent {\cite[Theorem 12.3.IX]{Rogers}, \cite{KentTAMS,KentPhD}}]\label{Kent}
				There exists a permutation $f$ such that
				\begin{enumerate}[(i)]
					\item for all recursively enumerable $B$, $f(B)$ and $f^{-1}(B)$ are recursively enumerable
					(and hence for all recursive $A$, $f(A)$ and $f^{-1}(A)$ are recursive);
					\item  $f$ is not recursive.
				\end{enumerate}
			\end{thm}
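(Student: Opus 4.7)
The plan is to construct $f$ by a finite-injury priority argument. Build $f = \bigcup_s f_s$ as the union of finite partial injections $f_s$, together with, for each $e$, auxiliary r.e. sets $U_e, V_e$ that we aim to make equal to $f(W_e)$ and $f^{-1}(W_e)$, respectively. List the requirements in descending priority order as
\[
N_0, P_0, Q_0, N_1, P_1, Q_1, \ldots,
\]
where $N_e$ says $f \ne \varphi_e$ (ensuring conclusion (ii)), $P_e$ says $U_e = f(W_e)$, and $Q_e$ says $V_e = f^{-1}(W_e)$.

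For the strategies: a back-and-forth step at each stage extends $f_s$ to cover the least undefined domain element and the least unused range value, ensuring $f$ is a total bijection in the limit. The requirement $N_e$ appoints a fresh large witness $x_e$; it waits for $\varphi_{e,s}(x_e)$ to converge, and when it does, commits $f(x_e) = y$ for a fresh $y \ne \varphi_{e,s}(x_e)$. If $f_s(x_e)$ was already set by the back-and-forth to a value equal to $\varphi_{e,s}(x_e)$, that value is reassigned and relocated as the image of a fresh input $n'$ (the injury). For the positive requirements, act passively: whenever $n \in W_{e,s}$ with $f_s(n) = m$ is deemed stable, enumerate $m$ into $U_e$; symmetrically, when $f_s(n) = m$ is stable and $m \in W_{e,s}$, enumerate $n$ into $V_e$.

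The main obstacle is ensuring the positive requirements are genuinely met despite injuries: if the back-and-forth tentatively assigned some $f_s(n) = m$ that was already enumerated into $U_{e'}$, and then an injury later changes $f(n)$, the value $m$ sits uncorrectably in the r.e. set $U_{e'}$. I plan to handle this by a delay-and-swap mechanism. If $n$ is the witness $x_e$ of some $N_e$, postpone any enumeration on account of $n$ until $N_e$ has committed a final value of $f(x_e)$; if $n$ is a non-witness defined by back-and-forth, then $f(n)$ is never altered and enumeration may happen immediately. On the injury event at $x_e$, choose the fresh relocation target $n'$ large, and also, for each $e' < e$ with $x_e \in W_{e',s}$, pick $n' \in W_{e',s}$ when possible so that the old value $y_s$ remains an element of $f(W_{e'})$ through the new pair $(n', y_s)$. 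Since each $N_e$ injures at most once, each $f_s(n)$ stabilizes in finitely many steps.

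Verification then follows standard lines. The finite-injury analysis yields $f = \lim_s f_s$, a well-defined total bijection. Each $N_e$ is satisfied: if $\varphi_e(x_e) \uparrow$ then totality of $f$ suffices, and if $\varphi_e(x_e) \downarrow$ then $f(x_e)$ is explicitly chosen to differ from it. The positive requirements give $U_e \subseteq f(W_e)$ because each enumerated $m$ comes from a stable pair $f(n) = m$ with $n \in W_e$, and conversely $f(W_e) \subseteq U_e$ because every $m = f(n)$ with $n \in W_e$ is enumerated at the first stage after which $n$ appears in $W_{e,s}$ and $f_s(n)$ is stable. The argument for $V_e$ is symmetric, yielding both (i) and (ii) of Theorem \ref{Kent}.
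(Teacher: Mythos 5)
Your approach is genuinely different from the paper's, and it has real gaps. The paper dispatches the theorem in one line: take $A$ cohesive and let $f$ be any non-recursive permutation of $A$ that is the identity off $A$. For any r.e.\ $B$, cohesiveness forces $A\cap B$ or $A\setminus B$ to be finite; in either case one checks that $f(B)\mathbin{\triangle}B$ is finite (since $f$ moves points only inside $A$, and on $A$ it merely permutes a finite or cofinite subset of $A$), so $f(B)$ is r.e.\ because it differs from $B$ by a finite set, and likewise for $f^{-1}(B)$. Non-recursiveness costs nothing because $A$ is infinite, hence admits uncountably many permutations but only countably many are recursive. No construction of $f$ itself is needed beyond citing the existence of a cohesive set.

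Your direct finite-injury construction faces an obstacle that a standard priority framework does not have: the positive requirements cannot be ``injured and reset,'' because an element once enumerated into $U_{e'}$ or $V_{e'}$ can never be removed. Your delay-and-swap mechanism does not close this. First, the delay: if $\varphi_e(x_e)\uparrow$, the requirement $N_e$ never ``commits,'' yet $f(x_e)$ does get a value from back-and-forth; if $x_e\in W_{e'}$ you must eventually put $f(x_e)$ into $U_{e'}$, but your rule forbids it forever, so $U_{e'}\subsetneq f(W_{e'})$. Second, the swap: when $N_e$ acts you want a single fresh $n'$ lying in $W_{e',s}$ \emph{simultaneously} for every $e'<e$ with $x_e\in W_{e',s}$, but those sets need share no element other than $x_e$ itself, and $x_e$ is not fresh; choosing distinct $n'_{e'}$ for each $e'$ would violate injectivity of $f$. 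Third, you only protect $e'<e$, but a lower-priority $U_{e'}$ ($e'\geq e$) that has already received the doomed value $y_s$ is just as unfixable, and priority gives you no license to abandon it, since every positive requirement must be met exactly, not merely cofinitely often. There is also a symmetric untreated problem for the $V_{e'}$: replacing $f(x_e)=y_s$ by a fresh $y$ can evict $x_e$ from $f^{-1}(W_{e'})$ after $x_e$ is already in $V_{e'}$. I'd recommend you abandon the direct injury construction and instead prove (or cite) the existence of a cohesive set and then use the paper's two-line argument; that is where the real content of the theorem lives.
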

			\begin{proof}
				Kent's permutation is just any permutation of a cohesive set
				(and the identity off the cohesive set).
			\end{proof}

	\section{Universal algebra setup}
		\begin{df}\label{pullback}
			The \emph{pullback} of $f:\omega\rightarrow\omega$ is $f^*:\omega^\omega\rightarrow \omega^\omega$ given by
			\[
				f^*(A)(n) = A(f(n)).
			\]
			We often write $F=f^*$.
			Given a set $S\subseteq\omega$ let $\D_S = S^\omega/\equiv_{\mathrm{T}}$.
			Thus the elements of $\D_S$ are of the form
			\[
				[g]_S = \{\,h\in S^\omega \mid h\equiv_{\mathrm{T}} g\,\},\qquad g\in S^\omega.
			\]
			Given $F:S^\omega\to S^\omega$, let $F_S:\D_S\rightarrow\D_S$ be defined by
			\[
				F_S([A]_S) = [F(A)]_S.
			\]
			If $F=f^*_S$ then we say that $F_S$ and $F$ are both \emph{induced} by $f$.
		\end{df}

		\begin{lem}\label{vivaldi}
			For each $f:\omega\rightarrow\omega$ and each $S\subseteq\omega$,
			the pullback $f^*$ maps $S^\omega$ into $S^\omega$.
		\end{lem}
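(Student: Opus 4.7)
The plan is to unpack the definitions directly; this statement is essentially a matter of verifying that composition with $f$ on the inside preserves the range of the outer function, so no real obstacle should arise.

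First I would fix an arbitrary $A \in S^\omega$, i.e., a function $A:\omega \to S$. The goal is to show that $f^*(A) \in S^\omega$, which amounts to checking that $f^*(A)(n) \in S$ for every $n \in \omega$. By Definition \ref{pullback}, $f^*(A)(n) = A(f(n))$. Since $f(n) \in \omega$ lies in the domain of $A$, and since $A$ takes values in $S$, the element $A(f(n))$ is in $S$, as required.

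Since $n$ was arbitrary, $f^*(A)$ is a function $\omega \to S$, hence an element of $S^\omega$. As $A$ was arbitrary, $f^*$ sends $S^\omega$ into $S^\omega$. I do not anticipate any subtlety here: the lemma is essentially a sanity check ensuring that the restricted map $f^*_S:S^\omega \to S^\omega$ used to define $F_S$ on $\D_S$ in Definition \ref{pullback} is well-defined, so the proof is a one-line unwinding of the definitions and requires no use of the hypothesis beyond the set-theoretic fact that the codomain of $A$ is $S$.
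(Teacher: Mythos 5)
Your proof is correct and is essentially the paper's own one-line argument spelled out in more detail: both simply observe that for $A\in S^\omega$ and $n\in\omega$, $f^*(A)(n)=A(f(n))\in S$.
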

		\begin{proof}
			\[
				A\in S^\omega,\, n\in\omega \quad\Longrightarrow\quad f^*(A)(n) = A(f(n)) \in S.\qedhere
			\]
		\end{proof}
		In light of Lemma \ref{vivaldi}, we can define:
		\begin{df}
			$f^*_S:\D_S\to\D_S$ is the map given by
			\[
				f^*_S([g]_S) = [f^*(g)]_S.
			\]
		\end{df}

		For $S\subseteq\omega$ (with particular attention to $S\in\{2,\omega\}$), let
		\[
			\D_S = S^\omega / \equiv_{\mathrm{T}}.
		\]
		Our main result concerns $\D_2$; the corresponding result for $\D_\omega$ is much easier:
		\begin{thm}
			Let $f:\omega\to\omega$ be a bijection and let $f^*$ be its pullback.
			If $f^*_S$ is an automorphism of $\D_S$ for some infinite computable set $S$, then $f$ is computable.
		\end{thm}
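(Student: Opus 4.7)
The plan is to exploit a single structural feature of $\D_S$: it has a minimum element, namely the Turing degree $\mathbf 0$ of computable members of $S^\omega$. Any order-automorphism must preserve this minimum, and the conclusion will follow by evaluating $f^*$ on one carefully chosen computable element.

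First I would check that $\D_S$ really does have a bottom when $S$ is infinite computable. The increasing enumeration $g:\omega\to S$ of $S$ is a computable bijection, so $g\in S^\omega$ is itself computable; more generally, the computable elements of $S^\omega$ form a single $\equiv_{\mathrm T}$-class that sits below every other class in $\D_S$. I would then invoke the standard fact that any order-automorphism of a poset fixes a minimum whenever one exists (a one-line consequence of bijectivity plus monotonicity in both directions), giving $f^*_S([g]_S)=[g]_S$.

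Next I would compute $f^*(g)$ directly from Definition \ref{pullback}: $f^*(g)(n)=g(f(n))=(g\circ f)(n)$, so $f^*(g)=g\circ f$. Since $g$ is a computable bijection $\omega\to S$, we have $g\circ f\equiv_{\mathrm T} f$. Combining this with the previous paragraph, $[g\circ f]_S=[f^*(g)]_S=[g]_S=\mathbf 0$, so $g\circ f$ is computable, and therefore $f=g^{-1}\circ(g\circ f)$ is computable.

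I do not expect a real obstacle: the whole argument fits in a few lines, and the only idea is that the computability of $S$ lets us construct a computable element of $S^\omega$ whose pullback under $f^*$ essentially \emph{is} $f$. The reason the analogous statement for $\D_2$ is genuinely harder, and is the main theorem of the paper, is precisely that no computable bijection $\omega\to 2$ exists, so there is no canonical computable element of $2^\omega$ into which the graph of $f$ can be directly encoded; one must work instead with the intrinsic degree structure of $\D_2$.
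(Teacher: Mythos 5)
Your proof is correct and is essentially the same as the paper's: both hinge on the existence of a computable bijection $\omega\to S$ and on the fact that an order-automorphism of $\D_S$ must fix the bottom degree $\mathbf 0$. The only cosmetic difference is that you push the computable bijection $g$ forward through $f^*$ to obtain $g\circ f$, while the paper applies $f^*$ to $\eta\circ f^{-1}$ and observes that the result is the computable $\eta$ (so $\eta\circ f^{-1}$ must itself be computable); these are the same argument read in opposite directions.
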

		\begin{proof}
			Let $\eta:\omega\rightarrow S$ be a computable bijection between $\omega$ and $S$.
			Then for all $x\in\omega$,
			\[
				f^*(\eta\circ f^{-1})(x)=(\eta\circ f^{-1})(f(x)) = \eta(f^{-1}(f(x))) = \eta(x).
			\]
			Since $\eta\in S^\omega$ is computable and $f^*_S$ is an automorphism,
			$\eta\circ f^{-1}\in S^\omega$ must be computable.
			Hence $f$ is computable.
		\end{proof}

	\section{Permutations preserve randomness}
			\begin{thm}\label{bioquant}
				If $B$ is $f$-$\mu_p$-random, $F=f^*$ and $A=F(B)$ or $A=F^{-1}(B)$, then $A$ is $f$-$\mu_p$-random.
			\end{thm}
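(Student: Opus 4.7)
The plan is to show that the pullback $F = f^*$ (and its inverse) is an $f$-computable, $\mu_p$-preserving homeomorphism of $2^\omega$, and then transport Martin-L\"of tests across it.

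First I would verify that $F$ is $f$-computable as a map $2^\omega \to 2^\omega$: from a sufficiently long initial segment of $B$, we can read off any desired initial segment of $F(B)$ using the defining equation $F(B)(n) = B(f(n))$ together with the oracle $f$. Since $f$ is a bijection of $\omega$, $f^{-1}$ is also $f$-computable (search for the unique preimage), so $F^{-1}$, given by $F^{-1}(A)(n) = A(f^{-1}(n))$, is likewise $f$-computable and continuous.

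Next I would check that $F$ preserves $\mu_p$. Because $\mu_p = \bigotimes_{n\in\omega}\nu_p$ uses the same factor measure $\nu_p$ in every coordinate, and $F$ merely permutes coordinates via the bijection $f$, the pushforward $F_{*}\mu_p$ agrees with $\mu_p$ on every basic cylinder set, hence on all Borel sets by the $\pi$--$\lambda$ theorem. The analogous statement holds for $F^{-1}$.

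With these two ingredients in hand, the randomness transfer is essentially routine. Suppose toward a contradiction that $A = F(B)$ fails $f$-$\mu_p$-randomness, as witnessed by an $f$-Martin-L\"of test $\{U_n\}_{n\in\omega}$ with $A \in \bigcap_n U_n$. Because $F$ is $f$-computable, each $F^{-1}(U_n)$ is uniformly $\Sigma^0_1$ in $f$; because $F$ preserves $\mu_p$, $\mu_p(F^{-1}(U_n)) = \mu_p(U_n) \le 2^{-n}$. Hence $\{F^{-1}(U_n)\}$ is itself an $f$-Martin-L\"of test, and $B \in \bigcap_n F^{-1}(U_n)$ contradicts the $f$-randomness of $B$. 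The case $A = F^{-1}(B)$ is symmetric, using that $F^{-1}$ is $f$-computable and $\mu_p$-preserving. The main obstacle I anticipate is the measure-preservation step: although ``permuting coordinates preserves a product of identical factors'' is intuitive, it should be verified carefully in the precise form needed here (uniformly for all $p \in [0,1]$, not only $p = 1/2$), with measurability handled via approximation by clopen sets.
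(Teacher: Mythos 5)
Your proposal is correct and takes essentially the same approach as the paper: pull the Martin-L\"of test back through the $f$-computable, $\mu_p$-preserving map $F$ (the paper does this with an explicit clopen-cylinder calculation, you do it more abstractly, but the content is identical). The only small difference is that the paper dispenses with the $A=F^{-1}(B)$ case by noting $f\equiv_{\mathrm T}f^{-1}$ so $f$- and $f^{-1}$-$\mu_p$-randomness coincide, while you argue it symmetrically; both are fine, and the measure-preservation step you flag as a worry is in fact routine once checked on cylinders.
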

			\begin{proof}
				First note that
				$f^{-1}$-$\mu_p$-randomness is the same as $f$-$\mu_p$-randomness since $f\equiv_{\mathrm{T}} f^{-1}$.
				Thus the result for $A=F^{-1}(B)$ follows from the result for $A=F(B)$.
				So suppose $A=F(B)$ and $A$ is not $f$-$\mu_p$-random.
				So $A\in\cap_n U_n$ where $\{U_n\}_n$ is an $f$-$\mu_p$-ML test. Then
				\[
					B\in \{X \mid F(X)\in\cap_n U_n\} = \cap_n V_n
				\]
				where
				\[
					V_n = \{X \mid F(X)\in U_n\} = F^{-1}(U_n)
				\]
				We claim that $V_n$ is $\Sigma^0_1(f)$ (uniformly in $n$) and $\mu_p(V_n)=\mu_p(U_n)$.
				Write $U_n=\cup_k [\sigma_k]$ where the strings $\sigma_k$ are all incomparable.
				Then
				\[
					V_n = \cup_k F^{-1}([\sigma_k])
				\]
				and
				\[
					\mu_p [\sigma_k] = \mu_p F^{-1}([\sigma_k])
				\]
				and the $F^{-1}([\sigma_k])$, $k\in\omega$ are still disjoint and clopen.
				(If we think of $\sigma\in 2^{<\omega}$ as a partial function from $\omega$ to $2$ then
				\[
					F^{-1}([\sigma]) = \{X \mid F(X) \in [\sigma]\}
				\]
				\[
					= \{X\mid X(f(n)) = \sigma(n), n<|\sigma|\}
					= [\{\langle f(n), \sigma(n)\rangle \mid n<|\sigma|\}].)
				\]
				Thus $\{V_n\}_n$ is another $f$-$\mu_p$-ML test, and so
				$B$ is not $f$-$\mu_p$-random, which completes the proof.
			\end{proof}

		\begin{thm}\label{Claim1}
			$\mu_p(\{A: A\ge_{\mathrm{T}} p\})=1$, in fact if $A$ is $\mu_p$-ML-random then $A$ computes $p$.
		\end{thm}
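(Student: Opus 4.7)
The first assertion is an immediate consequence of the second, since the set of $\mu_p$-ML-random sequences has $\mu_p$-measure one. So the plan is to show directly that every $\mu_p$-ML-random $A$ computes $p$. The strategy will be an effective strong law of large numbers: the empirical averages $M_n(A):=n^{-1}\sum_{i<n}A(i)$ converge to $p$, and for a $\mu_p$-random $A$ one expects this convergence to come with a rate that $A$ itself can exploit.

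The main step will be to package this rate as a $\mu_p$-Martin-L\"of test (which is permitted to use $p$ as an oracle, since $\mu_p$-randomness is defined relative to a presentation of $\mu_p$). I would start from a Chernoff--Hoeffding bound $\mu_p(\lvert M_n-p\rvert>\varepsilon)\le 2e^{-2n\varepsilon^2}$, pick a summable rate such as $\varepsilon_n:=n^{-1/4}$, and set
\[
U_k := \{A\in 2^\omega : \exists n\ge g(k),\ \lvert M_n(A)-p\rvert>\varepsilon_n\},
\]
for a rapidly growing computable $g$ chosen so that $\mu_p(U_k)\le 2^{-k}$. Each $U_k$ is $\Sigma^0_1(p)$, so $\{U_k\}_k$ is a genuine $\mu_p$-ML-test; passing it yields some $k_0=k_0(A)$ with $\lvert M_n(A)-p\rvert\le\varepsilon_n$ for every $n\ge g(k_0)$.

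What remains is to extract $p\le_{\mathrm T}A$ from this effective rate. The sequence $(M_n(A))_n$ is $A$-computable and, past the (unknown to $A$) threshold $g(k_0)$, effectively Cauchy at the known rate $\varepsilon_n$. The main obstacle I anticipate is that $k_0$ itself is not visibly $A$-computable, since its verification as stated involves $p$. My plan to fix this is to replace the $p$-dependent bound by an $A$-decidable Cauchy condition of the form $\lvert M_n(A)-M_m(A)\rvert\le\varepsilon_n+\varepsilon_m$ for $n,m\in[N,N^2]$, use unbounded search in $N$, and verify that the value $M_N(A)$ at termination is within the desired precision of $p$. Making this last verification go through, while delicate, is what will convert the effective convergence into the required Turing reduction.
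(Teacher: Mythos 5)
Your overall strategy---an effective strong law of large numbers via a Chernoff--Hoeffding rate, packaged as a test forcing the empirical frequencies $M_n(A)$ to track $p$ at a known rate past some threshold---is a sound route to the theorem and is genuinely different from the paper's argument, which simply cites Kjos-Hanssen's prior result that every \emph{Hippocratic} $\mu_p$-random computes $p$. Note that the Hippocratic hypothesis is strictly weaker than $\mu_p$-ML-randomness: Hippocratic test components are required to be $\Sigma^0_1$ with no oracle, whereas your $U_k$ is $\Sigma^0_1(p)$. So your construction, if completed, proves exactly the theorem as stated but not the stronger cited result; that is perfectly acceptable here.

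The gap is in the extraction step, and it is more serious than ``delicate.'' You correctly observe that passing the test yields a $k_0$ with $\lvert M_n(A)-p\rvert\le\varepsilon_n$ for all $n\ge g(k_0)$, and that $k_0$ is not visibly $A$-computable. But your proposed repair---search for the least $N$ such that $\lvert M_n(A)-M_m(A)\rvert\le\varepsilon_n+\varepsilon_m$ for all $n,m\in[N,N^2]$, then output $M_N(A)$---can fail: the search always terminates (eventually $N\ge g(k_0)$), but it may terminate at a \emph{spurious} $N<g(k_0)$ where the bounded window happens to be internally Cauchy while $M_N(A)$ is nowhere near $p$. There is no $A$-computable ``verification'' that the returned value is close to $p$, because any such check would need $p$ itself; and enlarging the test to rule out spurious windows is not obviously possible within the $2^{-k}$ measure budget.

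Fortunately the fix is far simpler than what you were attempting and makes the bounded-window search unnecessary: $k_0$ does not need to be $A$-computable at all. Turing reducibility $p\le_{\mathrm{T}}A$ only requires the \emph{existence} of some machine $\Phi$ with $\Phi^A=p$, and that machine may have $k_0$ hard-coded as a constant, non-uniformly in $A$. With $k_0$ built in, on input $j$ the machine finds the least $n\ge g(k_0)$ with $\varepsilon_n<2^{-j}$ and outputs the rational $M_n(A)$. This is a Cauchy name for $p$ with computable modulus, hence $p\le_{\mathrm{T}}A$ (if $p$ is a dyadic rational it is outright computable and there is nothing to prove; otherwise the Cauchy name yields the bits of $p$). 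With that replacement, your argument is correct.
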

		\begin{proof}
			Kjos-Hanssen \cite{K:2010} showed that each Hippocratic $\mu_p$-random set computes $p$.
			In particular, each $\mu_p$-random set computes $p$.
		\end{proof}
	\section{Cones have small measure}
		\begin{df}[Bernoulli measures]
			For each $n\in\omega$, 
			$$\mu_p(\{X\in 2^\omega: X(n)=1\})=p$$
			and $X(0),X(1),X(2),\ldots$ are mutually independent random variables.
		\end{df}

		\begin{df}
			An \emph{ultrametric} space is a metric space with metric $d$ satisfying the strong triangle inequality
			\[
				d(x, y)\le\max\{d(x, z), d(z, y)\}.
			\]
		\end{df}
		\begin{df}
			A \emph{Polish space} is a separable completely metrizable topological space.
		\end{df}
		\begin{df}
			In a metric space, $B(x,\eps)=\{y: d(x,y) < \eps\}$.
		\end{df}
		\begin{thm}[\texorpdfstring{\cite[Proposition 2.10]{BenMiller}}{}]\label{LDT}
			Suppose that $X$ is a Polish ultrametric space,
			$\mu$ is a probability measure on $X$, and
			$\A\subseteq X$ is Borel. Then
			\[
				\lim_{\eps\to 0}\frac{\mu(\A\cap B(x,\eps))}{\mu(B(x,\eps))}=1
			\]
			for $\mu$-almost every $x\in \A$.
		\end{thm}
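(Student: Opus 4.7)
My approach is to exploit the tree structure of balls in an ultrametric space together with L\'evy's upward martingale convergence theorem, mimicking the classical proof of the dyadic density theorem on $2^\omega$.

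First I would observe that any two open balls in $X$ of the same radius are either equal or disjoint: if $z \in B(x,\eps)\cap B(y,\eps)$ then $d(x,y)\le\max(d(x,z),d(z,y))<\eps$. So for each $\eps>0$ the $\eps$-balls form a partition of $X$, and separability makes this partition countable (each block meets a fixed countable dense set). Given a sequence $\eps_n\downarrow 0$, these partitions $\mathcal{P}_n$ refine each other (an $\eps_{n+1}$-ball being contained in a unique $\eps_n$-ball), generating a filtration $(\mathcal{F}_n)$ whose union generates the Borel $\sigma$-algebra, because the $\eps_n$-balls form a topological basis.

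Next I apply L\'evy's upward theorem to the bounded martingale
\[
M_n(x)=\E[\mathbf{1}_{\A}\mid\mathcal{F}_n](x)=\frac{\mu(\A\cap B_n(x))}{\mu(B_n(x))},
\]
where $B_n(x)$ denotes the $\mathcal{P}_n$-block containing $x$ (blocks of measure zero contribute a $\mu$-null set of $x$'s and can be discarded). This yields $M_n\to\mathbf{1}_{\A}$ $\mu$-almost everywhere, giving the density conclusion at least along the subsequence $\eps=\eps_n$.

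The last step is to upgrade convergence along $(\eps_n)$ to convergence along the full net $\eps\to 0$. For fixed $x$, the map $\eps\mapsto B(x,\eps)$ is a step function taking only countably many values, forming a decreasing chain of balls cofinal with $\{B(x,\eps_n)\}_n$; so the two limits coincide. The principal conceptual input is the martingale convergence theorem, and the only care required is in handling the countability of the partitions (which follows from separability) and in checking that convergence along a single decreasing sequence of radii in fact forces convergence along the full net, which I expect to be the most delicate bookkeeping step.
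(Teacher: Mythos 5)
The paper does not prove this theorem itself --- it is quoted from Ben Miller's notes and then applied. So I will assess your argument on its own terms.

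Your setup is sound: in an ultrametric space, balls of a common radius partition the space, separability makes each such partition countable, the partitions refine as the radius shrinks, and L\'evy's upward theorem applied to the filtration $(\mathcal F_n)$ generated by the $\eps_n$-balls does give $M_n(x)=\mu(\A\mid B(x,\eps_n))\to\mathbf 1_\A(x)$ for $\mu$-a.e.\ $x$. The gap is precisely the step you flagged as delicate: passing from convergence along the fixed sequence $(\eps_n)$ to convergence of the full net $\eps\to 0$. ``Cofinal'' runs the wrong way. It is true that for every ball $B\ni x$ some $B(x,\eps_n)\subseteq B$, but convergence of a cofinal subnet does not force convergence of the net; the ratios $\mu(\A\mid B)$ for the balls $B$ you skip could oscillate. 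For a fixed $x$ the family of balls $\{B(x,\eps)\}_{\eps>0}$ is indeed a countable chain, but its order type need not be $\omega^*$. In a general Polish ultrametric space the set $D$ of realized distances (even restricted to pairs from a countable dense set, which is all that matters) may have positive accumulation points or even be dense in an interval; then no single decreasing sequence $\eps_n\to 0$ can pass through every block of the chain around every $x$, and you genuinely lose information. (For $X=2^\omega$ with $d(x,y)=2^{-\min\{n:x(n)\ne y(n)\}}$ the distance set is $\{2^{-n}\}$, the chain around each $x$ is exactly $\{B(x,2^{-n})\}_n$, and your argument is complete --- but the theorem is stated for arbitrary Polish ultrametric spaces.)

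The standard way to get the full net limit is a Vitali-type covering argument, which is especially clean here because the laminar structure makes the Vitali lemma trivial: \emph{any} family of balls, of arbitrary radii, contains a disjoint subfamily with the same union (take the $\subseteq$-maximal ones). So if $E_\delta=\{x\in\A:\liminf_{\eps\to 0}\mu(\A\mid B(x,\eps))<1-\delta\}$ had positive measure, cover it by an open $U$ with $\mu(U)<\mu(E_\delta)+\eta$, extract from the witnessing balls inside $U$ a disjoint cover of $E_\delta$, and sum to get $\mu(E_\delta)\le\sum_i\mu(\A\cap B_i)<(1-\delta)\mu(U)<(1-\delta)(\mu(E_\delta)+\eta)$, whence $\mu(E_\delta)=0$. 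This handles all radii simultaneously and is what your martingale argument cannot do without an extra hypothesis on the metric. So: the structure you built is right and the first two-thirds are correct, but the final upgrade needs a covering argument (or a restriction to metrics whose distance set accumulates only at $0$) rather than the cofinality remark.
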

		\begin{df}
			For any measure $\mu$ define the conditional measure by
			\[
				\mu(\A\mid\B) = \frac{\mu(\A\cap\B)}{\mu(\B)}.
			\]
			A measurable set $\A$ has density $d$ at $X$ if
			\[
				\lim_n \mu_p(\A\mid [X\restrict n]) = d.
			\]
		\end{df}
		Let $\Xi(\A) = \{X: \A\text{ has density }1\text{ at }X\}$.
		\begin{thm}[Lebesgue Density Theorem for $\mu_p$]\label{cold-brewed}
			For Cantor space with Bernoulli($p$) product measure $\mu_p$, the Lebesgue Density Theorem holds:
			\[
				\lim_{n\to\infty}\frac{\mu_p(\A\cap [x\restrict n])}{\mu_p([x\restrict n])} = 1
			\]
			for $\mu$-almost every $x\in \A$.

			If $\A$ is measurable then so is $\Xi(\A)$.
			Furthermore, the measure of the symmetric difference of $\A$ and $\Xi(\A)$ is zero, so
			$\mu(\Xi(\A))=\mu(\A)$.
		\end{thm}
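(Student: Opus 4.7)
The statement is essentially an instance of Theorem~\ref{LDT} applied to the specific Polish ultrametric space $2^\omega$. The plan is to check the hypotheses of that theorem, identify balls with cylinders, and then extract the second half of the statement by symmetry.

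\emph{Step 1 (setup).} I would first observe that $2^\omega$ equipped with the standard metric
\[
d(x,y) = 2^{-\min\{n \,:\, x(n)\ne y(n)\}},\qquad d(x,x)=0,
\]
is a compact, hence Polish, ultrametric space, and that under this metric the open ball $B(x,2^{-n})$ coincides exactly with the cylinder $[x\restrict n]$. The Bernoulli measure $\mu_p$ is a Borel probability measure on $2^\omega$, and measurable sets are in particular Borel (after completion, the density statement is unchanged on a null set). These are all routine verifications.

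\emph{Step 2 (apply \ref{LDT}).} With the identification $B(x,2^{-n}) = [x\restrict n]$, Theorem~\ref{LDT} applied with $X = 2^\omega$ and $\mu = \mu_p$ immediately yields
\[
\lim_{n\to\infty}\frac{\mu_p(\A\cap [x\restrict n])}{\mu_p([x\restrict n])} = 1
\]
for $\mu_p$-almost every $x\in\A$, since letting $\eps = 2^{-n}$ is cofinal in $\eps\to 0$.

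\emph{Step 3 (measurability of $\Xi(\A)$).} For each fixed $n$, the ratio $\mu_p(\A\cap [x\restrict n])/\mu_p([x\restrict n])$ depends only on the finite string $x\restrict n$, so as a function of $x$ it is a simple Borel function. Thus the set on which the limit exists and equals $1$ is Borel, so $\Xi(\A)$ is measurable.

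\emph{Step 4 (symmetric difference).} The inclusion $\mu_p(\A\setminus\Xi(\A))=0$ is exactly the content of Step 2. For the reverse inclusion $\mu_p(\Xi(\A)\setminus\A)=0$, apply Step 2 to the complement $\A^c$: for $\mu_p$-almost every $x\in\A^c$, the set $\A^c$ has density $1$ at $x$, so $\A$ has density $0$ at $x$, whence $x\notin\Xi(\A)$. Combining, $\mu_p(\A\triangle\Xi(\A))=0$, and in particular $\mu_p(\Xi(\A))=\mu_p(\A)$.

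There is no serious obstacle; the only content beyond citing Theorem~\ref{LDT} is the trivial identification of metric balls with cylinder sets and the symmetry argument of Step 4.
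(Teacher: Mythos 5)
Your proposal is correct and takes the same route as the paper: introduce the ultrametric $d(x,y)=2^{-\min\{n:x(n)\ne y(n)\}}$, identify balls with cylinders, and cite Theorem~\ref{LDT}. The paper's proof is just that one line and does not spell out your Steps 3 and 4 (measurability of $\Xi(\A)$ and the symmetric-difference claim), so you have actually supplied details the paper leaves implicit, but the underlying argument is the same.
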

		\begin{proof}
			Consider the ultrametric $d(x,y)=2^{-\min\{n:x(n)\ne y(n)\}}$.
			It induces the standard topology on $2^\omega$. Apply Theorem \ref{LDT}.
		\end{proof}
		Sacks \cite{Sacks:63} and de Leeuw, Moore, Shannon, and Shapiro \cite{deLeeuw} showed that
		each cone in the Turing degrees has measure zero. Here we use Theorem \ref{cold-brewed} to extend this to $\mu_p$.
		\begin{thm}\label{ce}
			If $\mu_p(\{X: W_e^X=A\})>0$ then $A$ is c.e. in $p$.
		\end{thm}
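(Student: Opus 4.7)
The plan is to adapt the classical Sacks--de Leeuw--Moore--Shannon--Shapiro argument, using the $\mu_p$ Lebesgue density theorem (Theorem~\ref{cold-brewed}) in place of the usual Lebesgue-measure version.

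First I would set $\A = \{X : W_e^X = A\}$, a $\Pi^0_2$ and hence Borel subset of $2^\omega$ with $\mu_p(\A) > 0$ by hypothesis. Applying Theorem~\ref{cold-brewed}, $\mu_p$-almost every point of $\A$ is a density-one point, so I may fix one such $X$ and then a sufficiently long prefix $\sigma \prec X$ with
\[
	\mu_p(\A \mid [\sigma]) > \tfrac{2}{3}.
\]
For each $n \in \omega$, let $\mathcal{U}_n = \{X : n \in W_e^X\}$, a $\Sigma^0_1$ open set uniformly in $n$. The decisive dichotomy: if $n \in A$ then every $X \in \A$ lies in $\mathcal{U}_n$, whence $\A \cap [\sigma] \subseteq \mathcal{U}_n$ and so $\mu_p(\mathcal{U}_n \mid [\sigma]) > 2/3$; if $n \notin A$ then $\A \cap \mathcal{U}_n = \nil$, and so $\mu_p(\mathcal{U}_n \mid [\sigma]) < 1/3$.

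To conclude, I would observe that $\mathcal{U}_n \cap [\sigma]$ is a c.e.\ union of basic clopen cylinders, each of $p$-computable $\mu_p$-measure, so $\mu_p(\mathcal{U}_n \cap [\sigma])$ is left-c.e.\ in $p$ uniformly in $n$, while $\mu_p([\sigma])$ is itself $p$-computable. One then enumerates $n$ into a set $\widetilde A$ as soon as the running lower approximation to $\mu_p(\mathcal{U}_n \mid [\sigma])$ exceeds $1/2$. By the dichotomy $\widetilde A = A$, witnessing that $A$ is c.e.\ in $p$ (the string $\sigma$ enters as a non-uniform parameter, which is harmless for the bare existence statement in the theorem). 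I do not foresee any real obstacle: the one delicate step is the localization via density, and Theorem~\ref{cold-brewed} was set up precisely for this purpose; the rest is the standard ``measure of an open set is left-c.e.'' bookkeeping, here relativized to $p$.
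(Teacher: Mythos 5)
Your proof is correct and follows essentially the same route as the paper: both invoke the $\mu_p$ density theorem to obtain a cylinder $[\sigma]$ on which $\A = \{X : W_e^X = A\}$ has conditional measure above a fixed threshold, and then recover $A$ by the majority-vote criterion $n\in A \Leftrightarrow \mu_p(\{Y : \sigma\prec Y \wedge n\in W_e^Y\})$ exceeds that threshold, which is c.e.\ in $p$. The only cosmetic difference is that you use $2/3$ where the paper uses $1/2$, and you spell out the left-c.e.-in-$p$ bookkeeping that the paper leaves as ``clearly.''
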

		\begin{proof}
			Suppose $\mu_p(\{X: W_e^X=A\})>0$.
			Then $S := \{X \mid W_e^X = A\}$ has positive measure, so $\Xi(S)$ has positive measure,
			and hence by Theorem \ref{LDT} there is an $X$ such that $S$ has density 1 at $X$.
			Thus, there is an $n$ such that
			$\mu_p(S\mid [X\restrict n]) > \frac12$.
			Let $\sigma = X\restrict n$.
			We can now enumerate $A$ using $p$ by taking a ``vote'' among the sets extending $\sigma$.
			More precisely, $n\in A$ iff
			\[
				\mu_p(\{Y: \sigma\prec Y\wedge n\in W_e^Y\}) > \frac12,
			\]
			and the set of $n$ for which this holds is clearly c.e. in $p$.
		\end{proof}
		\begin{thm}\label{Claim3}
			Each cone strictly above $p$ has $\mu_p$-measure zero:
			\[
				\mu_p(\{A: A\ge_{\mathrm{T}} q\})=1\qquad\Longrightarrow\qquad q\le_{\mathrm{T}} p.
			\]
		\end{thm}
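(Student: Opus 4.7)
The plan is to decompose the cone above $q$ as a countable union of events of the form ``$W_e^A = q$'' and ``$W_{e'}^A = \bar q$'' and then invoke Theorem \ref{ce} twice, once for $q$ and once for its complement.

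The key reformulation is that $q \le_{\mathrm{T}} A$ if and only if both $q$ and $\omega\setminus q$ are c.e.\ in $A$; equivalently, there exist indices $e,e'$ such that $W_e^A = q$ and $W_{e'}^A = \omega\setminus q$. Hence
\[
    \{A : A \ge_{\mathrm{T}} q\} \;=\; \bigcup_{e,e'\in\omega}\bigl(\{A : W_e^A = q\} \cap \{A : W_{e'}^A = \omega\setminus q\}\bigr).
\]
The hypothesis gives this union $\mu_p$-measure $1$, hence strictly positive measure, so by countable additivity there is a pair $(e,e')$ for which the intersection in the display has positive $\mu_p$-measure. Then each of the two factors has positive $\mu_p$-measure on its own.

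Now apply Theorem \ref{ce} to each factor: since $\mu_p(\{A : W_e^A = q\}) > 0$, the set $q$ is c.e.\ in $p$; since $\mu_p(\{A : W_{e'}^A = \omega\setminus q\}) > 0$, the set $\omega\setminus q$ is c.e.\ in $p$. A set that is both c.e.\ and co-c.e.\ in $p$ is computable from $p$, so $q \le_{\mathrm{T}} p$, as required.

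There is no real obstacle here: the entire difficulty has been absorbed into Theorem \ref{ce} (which rests on the Lebesgue density theorem for $\mu_p$). The only point worth stating carefully is the translation from Turing reducibility to the c.e.\ operators $W_e$, which is needed precisely because Theorem \ref{ce} is phrased in terms of $W_e^X$ rather than in terms of arbitrary Turing functionals.
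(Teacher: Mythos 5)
Your proposal is correct and follows essentially the same route as the paper: reduce $q \le_{\mathrm{T}} A$ to ``$q$ is c.e.\ in $A$ and co-c.e.\ in $A$,'' then invoke Theorem \ref{ce} twice. The paper's proof is terser and leaves implicit the countable-additivity step you spell out (passing from the union having full measure to a single pair $(e,e')$ having positive measure), but there is no difference in the underlying argument.
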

		\begin{proof}
			If $A$ can compute $q$ then $A$ can enumerate both $q$ and the complement of $q$.
			Hence by Theorem \ref{ce}, $q$ is both c.e. in $p$ and co-c.e. in $p$; hence $q\le_{\mathrm{T}} p$.
		\end{proof}
	\section{Main result}
		We are now ready to prove our main result Theorem \ref{main} that
		no nontrivial automorphism of the Turing degrees is induced by a permutation of $\omega$.
		\begin{thm}\label{main}
			If $\pi$ is an automorphism of $\D_2$ which is induced by a permutation of $\omega$ then
			$\pi(\mathbf p)=\mathbf p$ for each $\mathbf p\in\D_{\mathrm{T}}$.
		\end{thm}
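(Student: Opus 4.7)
The plan is to combine the measure-preservation of $F=f^*$ with Theorems~\ref{Claim1} and~\ref{Claim3}. The crux is the observation that, because $f$ is a bijection and $\mu_p$ is a product measure, $F$ preserves every Bernoulli measure $\mu_p$: the preimage of a cylinder $[\sigma]$ under $F$ is the cylinder cut out by constraints on the coordinates $f(0),\ldots,f(|\sigma|-1)$, which are distinct, hence by independence $\mu_p(F^{-1}[\sigma]) = \mu_p([\sigma])$.

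Given this, the proof of triviality proceeds by a symmetric squeeze. First, I fix an arbitrary degree $\mathbf{p}\in\D_{\mathrm{T}}$ with representative $p\in 2^\omega$ and consider the cone $C_{\mathbf{p}} := \{A \in 2^\omega : A \ge_{\mathrm{T}} p\}$, which has $\mu_p$-measure $1$ by Theorem~\ref{Claim1}. Pulling back under $F$, which preserves $\mu_p$, the set $F^{-1}(C_{\mathbf{p}})$ also has $\mu_p$-measure $1$. Translating through the induced automorphism $\pi([A])=[F(A)]$, the condition $F(A)\ge_{\mathrm{T}} p$ is just $[A] \ge \pi^{-1}(\mathbf{p})$, so this preimage is exactly the cone $C_{\pi^{-1}(\mathbf{p})}$. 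Hence Theorem~\ref{Claim3} delivers $\pi^{-1}(\mathbf{p}) \le \mathbf{p}$.

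Second, the same argument applied to the permutation $f^{-1}$, which induces the automorphism $\pi^{-1}$, yields $\pi(\mathbf{p}) \le \mathbf{p}$. Applying $\pi$ to the inequality $\pi^{-1}(\mathbf{p})\le\mathbf{p}$ gives the reverse inequality $\mathbf{p}\le \pi(\mathbf{p})$, and combining the two, $\pi(\mathbf{p}) = \mathbf{p}$. Since $\mathbf{p}$ was arbitrary, $\pi$ is the identity.

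There is no genuine obstacle once Sections~3 and~4 are in hand; the proof is essentially a bookkeeping assembly. The only point I would be careful about is tracking which cone appears on each side: the measure-preservation of $F$ transports the cone above $\mathbf{p}$ to the cone above $\pi^{-1}(\mathbf{p})$ rather than $\pi(\mathbf{p})$, so one has to run the argument symmetrically for $\pi$ and $\pi^{-1}$ in order to pin $\mathbf{p}$ from both sides.
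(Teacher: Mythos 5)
Your proof is correct and reaches the same conclusion by essentially the same three ingredients (Theorem~\ref{Claim1}, measure-preservation of $F$, Theorem~\ref{Claim3}, plus the automorphism property), but you route the argument a bit more directly than the paper does. The paper passes through the randomness-preservation machinery: it takes an $f$-$\mu_p$-random $B$, invokes Theorem~\ref{bioquant} to see that $A=F^{-1}(B)$ is again $f$-$\mu_p$-random, uses Theorem~\ref{Claim1} to get $p\le_{\mathrm T}A$, and then the automorphism property to get $F(p)\le_{\mathrm T}F(A)=B$; since this holds for a.e.\ $B$, Theorem~\ref{Claim3} gives $F(p)\le_{\mathrm T}p$. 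You instead work at the level of measure: the cone $C_{\mathbf p}$ has $\mu_p$-measure one, $F$ preserves $\mu_p$ (the cylinder computation you give is exactly the one buried in the proof of Theorem~\ref{bioquant}), and the automorphism property identifies $F^{-1}(C_{\mathbf p})$ with the cone $C_{\pi^{-1}(\mathbf p)}$, so Theorem~\ref{Claim3} yields $\pi^{-1}(\mathbf p)\le\mathbf p$. This bypasses Theorem~\ref{bioquant} entirely and needs only the ``measure one'' clause of Theorem~\ref{Claim1}, not the randomness refinement, so it is a genuine (if small) streamlining. The one caveat you should make explicit is that passing from measure-preservation on cylinders to measure-preservation on the Borel set $C_{\mathbf p}$ uses that cylinders generate the Borel $\sigma$-algebra and form a $\pi$-system (Dynkin/Carath\'eodory), and that cones are Borel; both are standard but worth a sentence. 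You also correctly noticed the small orientation issue — pulling back the cone above $\mathbf p$ lands you on $\pi^{-1}(\mathbf p)$, not $\pi(\mathbf p)$ — and your symmetric two-sided squeeze handles it cleanly, just as the paper's final two sentences do.
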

		\begin{proof}
			Fix a permutation $f:\omega\to\omega$ and let $F=f^*\restrict 2^\omega$.
			Let $B$ be $f$-$\mu_p$-random.
			We claim that $B$ computes $F(p)$.

			By Theorem \ref{Claim1}, for any $f$-$\mu_p$ random $A$, we have
			$p\le_{\mathrm{T}} A$, hence $F(p)\le_{\mathrm{T}} F(A)$. So it suffices to represent $B$ as $F(A)$.

			Now $B = F(F^{-1}(B))$. Let $A = F^{-1}(B)$. By Theorem \ref{bioquant}, $A$ is $f$-$\mu_p$-random.
			Thus every $f$-$\mu_p$-random computes $F(p)$.

			Thus we have completed the proof of our claim that $\mu_p$-almost every real computes $F(p)$.

			By
			Theorem \ref{Claim3} it follows that $F(p)\le_{\mathrm{T}} p$.
			
			By considering the inverse $f^{-1}$ we also obtain $F^{-1}(p)\le_{\mathrm{T}} p$ and hence $p\le_{\mathrm{T}} F(p)$.
			So $F(p)\equiv_{\mathrm{T}} p$ and $F$ induces the identity automorphism.
		\end{proof}

	\section{Computing the permutation}
		\begin{thm}\label{GuestHouse4426}
			Let $f:\omega\to\omega$ be a permutation.
			Let $F=f^*$ be its pullback (Definition \ref{pullback}) to $2^\omega$.
			If for positive Lebesgue measure many $G$, $F(G)\le_{T} G$,
			then $f$ is recursive.
		\end{thm}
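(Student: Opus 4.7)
The plan is to extract $f$ from a finite amount of parameter data: an index $e$ for a Turing functional computing $F$ on a positive-measure set, a cylinder $[\sigma]$ on which that set has density close to $1$, and a short table recording the few exceptional small-index values of $f$. The argument combines countable pigeonhole, Lebesgue density (Theorem~\ref{cold-brewed}), and a majority-vote decoding.

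First, I would write
\[
	\{G : F(G) \le_{\mathrm T} G\} \;=\; \bigcup_{e} T_e, \qquad T_e := \{G : \Phi_e^G = F(G)\},
\]
and use countable additivity of $\mu_{1/2}$ together with the positive-measure hypothesis to fix an $e$ with $\mu_{1/2}(T_e) > 0$. Then I would apply Theorem~\ref{cold-brewed} to choose a string $\sigma$ such that $\mu_{1/2}(T_e \cap [\sigma]) > (7/8)\,\mu_{1/2}([\sigma])$. Both $e$ and $\sigma$ are finite data that will be hard-wired into the algorithm.

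The heart of the argument is the vote. For each $n$ and candidate $m$, consider
\[
	P_m(n) := \mu_{1/2}\bigl(\{G \in [\sigma] : \Phi_e^G(n)\!\downarrow = G(m)\}\bigr)\big/\mu_{1/2}([\sigma]),
\]
a left-c.e.\ real uniformly in $(n, m)$. On $T_e \cap [\sigma]$ the identity $\Phi_e^G(n) = G(f(n))$ holds, so: if $f(n) \ge |\sigma|$, then $P_{f(n)}(n) > 7/8$, while for any $m \ge |\sigma|$ with $m \ne f(n)$ the bits $G(m)$ and $G(f(n))$ are independent fair coins on $[\sigma]$, forcing $P_m(n) \le 1/2 + 1/8 = 5/8$. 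Thus a dovetailed search over pairs $(m,s)$ with $m \ge |\sigma|$, looking for the first stage at which the lower approximation of $P_m(n)$ crosses the threshold $3/4$, halts with the correct answer $m = f(n)$.

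The main obstacle is the case $f(n) < |\sigma|$: there $\Phi_e^G(n)$ is essentially the constant $\sigma(f(n))$ on $T_e \cap [\sigma]$, so every $m < |\sigma|$ with $\sigma(m) = \sigma(f(n))$ also satisfies $P_m(n) > 7/8$ and the vote fails to single out $f(n)$. I would bypass this by observing that $S := f^{-1}(\{0, \ldots, |\sigma|-1\})$ has cardinality $|\sigma|$, so $f \restrict S$ is a finite function that may simply be appended to the parameter data. The final algorithm is then: on input $n$, consult the table for $f \restrict S$ if $n \in S$, otherwise run the dovetailed vote above; this is total and correct, so $f$ is recursive.
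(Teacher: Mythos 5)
Your proposal is correct, and it reaches the conclusion by a genuinely different combinatorial mechanism than the paper's. The paper recovers $f^{-1}$ by a \emph{sensitivity} argument: fixing the same $\sigma$ by density, it compares the two runs $\Phi^{A+n}$ and $\Phi^{A-n}$ (flipping a single input bit $n > |\sigma|$), observes that $F(A+n)$ and $F(A-n)$ disagree at exactly the position $f^{-1}(n)$, and shows that $D_{n,b} = \{A : \Phi^{A+n}(b)\!\downarrow \ne \Phi^{A-n}(b)\!\downarrow\}$ has conditional measure at least $80\%$ precisely when $b = f^{-1}(n)$; one then searches for that $b$. Your version instead recovers $f$ directly by a \emph{correlation} vote: for each candidate $m$ you estimate how often $\Phi_e^G(n)$ agrees with $G(m)$ on $[\sigma]$, and independence of the coordinates beyond $|\sigma|$ forces the winning $m$ to be $f(n)$. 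Both proofs live entirely on a density-$1$ cylinder, both rely on the winning quantity being left-c.e.\ uniformly so that a dovetailed threshold search terminates correctly, and both must hard-wire a finite table for the small indices where the independence argument breaks down — a step you make explicit (the set $S = f^{-1}(\{0,\dots,|\sigma|-1\})$) while the paper leaves it tacit by restricting to $n > |\sigma|$. Your route avoids running the functional twice on perturbed oracles, at the cost of a marginally more delicate measure estimate ($1/2 + 1/8$ versus a straightforward union bound); the paper's route is perhaps more symmetric in flavor since it manifestly inverts the permutation bit by bit. Either is a complete proof of the theorem.
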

		\begin{proof}
			By the Lebesgue Density Theorem we can get a $\Phi$ and a $\sigma$ such that,
			if $\mu_\sigma$ denotes conditional probability on $\sigma$
			and $E = \{A: F(A)=\Phi^A\}$, then
			\[
				\mu_\sigma(E)\ge 95\%.
			\]
			For simplicity let us write $p_n(A) = A+n=A\cup\{n\}$ and $m_n(A) = A-n=A\setminus\{n\}$.
			Then $p_n^{-1}E = \{A: p_n(A)\in E\}$. Note that
			\[
				E \subseteq p_n^{-1}(E)\cup m_n^{-1}(E)
			\]
			and
			\[
				E^c \subseteq p_n^{-1}(E^c)\cup m_n^{-1}(E^c)
			\]
			Then
			\[
				\mu_{\sigma}(E) \le \mu_{\sigma}(p_n^{-1}(E)\cup m_n^{-1}(E))
				\le \mu_{\sigma}(p_n^{-1}(E)) + \mu_{\sigma}(m_n^{-1}(E))
			\]
			We now have
			\[
				\mu_\sigma\{A:F(A+n)=\Phi^{A+n}\}\ge 90\%
			\]
			and
			\[
				\mu_\sigma\{A:F(A-n)=\Phi^{A-n}\}\ge 90\%;
			\]
			Indeed,
			the events $m_n^{-1}(A)$, $p_n^{-1}(A)$ are each independent of the event $n\in A$,
			so for $n>|\sigma|$, 
			\begin{eqnarray*}
				95\% \le \mu_{\sigma}(E) &=& \mu_{\sigma}(p_n^{-1}(E)\mid n\in A)\mu_{\sigma}(n\in A)
				+ \mu_{\sigma}(p_n^{-1}(E)\mid n\notin A)\mu_{\sigma}(n\notin A)\\
				&=& \frac12\left(
					  \mu_{\sigma}(p_n^{-1}(E)\mid n\in A)
					+ \mu_{\sigma}(m_n^{-1}(E)\mid n\notin A)
				\right)\\
				&=& \frac12\left(\mu_{\sigma}(p_n^{-1}(E))+\mu_{\sigma}(m_n^{-1}(E))\right)
			\end{eqnarray*}
			which gives
			\[
				1.9 \le \mu_{\sigma}(p_n^{-1}(E)) + \mu_{\sigma}(m_n^{-1}(E))
				\le 1 + \min\{\mu_{\sigma}(p_n^{-1}(E)),\mu_{\sigma}(m_n^{-1}(E))\}.
			\]

			Also $F(A-n)$ and $F(A+n)$ differ in exactly one bit, namely $f^{-1}(n)$, for all $A$:
			\begin{eqnarray*}
				F(A-n)(b)\ne F(A+n)(b)&\Longleftrightarrow& (A-n)(f(b))\ne (A+n)(f(b))\\
									  &\Longleftrightarrow& n=f(b)\Longleftrightarrow b=f^{-1}(n),
			\end{eqnarray*}
			that is
			\[
				\{A: (\forall b)(F(A+n)(b)\ne F(A-n)(b)\leftrightarrow b=f^{-1}(n))\} = 2^\omega.
			\]
			Let $D_{n,b} = \{A: \Phi^{A+n}(b)\downarrow\ne\Phi^{A-n}(b)\downarrow\}$.
			For $n>|\sigma|$,
			\[
				  \mu_\sigma \left(D_{n,f^{-1}(n)}\setminus\bigcup_{b\ne f^{-1}(n)} D_{n,b}\right)
				= \mu_\sigma\{A: (\forall b)(A\in D_{n,b}\leftrightarrow b=f^{-1}(n))\}\ge 80\%
			\]
			since
			\[
				\mu_\sigma\{A: \neg(\forall b)(A\in D_{n,b}\quad\leftrightarrow\quad b=f^{-1}(n))\}
			\]
			\[
				\le
				\mu_\sigma(\neg p_n^{-1}(E)) + \mu_\sigma(\neg m_n^{-1}(E)) \le 10\%+10\%=20\%.
			\]

			Therefore, given any $n$, we can compute $f^{-1}(n)$:
			enumerate computations until we have found some bit $b$ such that
			\[
				\mu_\sigma D_{n,b}\ge 80\%.
			\]
			Then $b=f^{-1}(n)$.

			Thus $f^{-1}$ is computable and hence so is $f$.
		\end{proof}

		\begin{thm}\label{abstract}
			If $\pi$ is an automorphism of $\D_{\mathrm{T}}$ which is induced by
			a permutation $f$ of $\omega$ then $f$ is recursive.
		\end{thm}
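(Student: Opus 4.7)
The proof proposal is essentially a one-line combination of Theorems \ref{main} and \ref{GuestHouse4426}, which have done all the heavy lifting. The plan is as follows.

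First I would invoke Theorem \ref{main}: since $\pi$ is assumed to be an automorphism of $\D_{\mathrm{T}}$ induced by a permutation $f$ of $\omega$ (and $\D_{\mathrm{T}}$ may be identified with $\D_2$ via the pullback $F=f^*\restrict 2^\omega$), the theorem forces $\pi$ to be the trivial automorphism. In degree-theoretic terms this means $F(G)\equiv_{\mathrm{T}} G$ for every $G\in 2^\omega$, and in particular $F(G)\le_{\mathrm{T}} G$ for every $G\in 2^\omega$.

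Next I would observe that the set $\{G : F(G)\le_{\mathrm{T}} G\}$ therefore has full Lebesgue measure, which is certainly positive. Hence the hypothesis of Theorem \ref{GuestHouse4426} is satisfied, and that theorem immediately yields the conclusion that $f$ is recursive.

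There is really no obstacle here beyond noting that the identification of $\D_{\mathrm{T}}$ with $\D_2$ is legitimate for the purposes of applying Theorem \ref{main} to the automorphism $\pi$: an automorphism of $\D_{\mathrm{T}}$ induced by a permutation of $\omega$ via pullback automatically descends to an automorphism of $\D_2$ (by Lemma \ref{vivaldi} with $S=2$), so Theorem \ref{main} applies verbatim. With that identification in hand, the argument is a clean two-step combination and requires no further computation.
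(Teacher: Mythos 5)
Your proof is correct and follows exactly the same two-step route as the paper: apply Theorem \ref{main} to conclude $f^*(G)\equiv_{\mathrm{T}} G$ for all $G\in 2^\omega$, then feed the resulting (full-measure, hence positive-measure) set $\{G: F(G)\le_{\mathrm{T}} G\}$ into Theorem \ref{GuestHouse4426}. The extra remarks about identifying $\D_{\mathrm{T}}$ with $\D_2$ are a harmless clarification and do not alter the argument.
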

		\begin{proof}
			By Theorem \ref{main}, $f^*(G)\equiv_{\mathrm{T}} G$ for each $G\in 2^\omega$.
			By Theorem \ref{GuestHouse4426}, $f$ is recursive.
		\end{proof}

	\section{Measure-preserving homeomorphisms of the Cantor set}
		\begin{pro}
			A permutation of $\omega$ induces a homeomorphism of $2^\omega$ that is $\mu_p$-preserving for each $p$.
		\end{pro}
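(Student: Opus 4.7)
The plan is to verify three properties of $F := f^{*}\restrict 2^\omega$ in turn: that $F$ is a bijection of $2^\omega$, that $F$ and its inverse are continuous, and that $\mu_p \circ F^{-1} = \mu_p$ for every $p \in [0,1]$.

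First I would exhibit $(f^{-1})^{*}$ as a two-sided inverse of $F$: for any $A \in 2^\omega$ and $n \in \omega$, $(f^{-1})^{*}(F(A))(n) = F(A)(f^{-1}(n)) = A(f(f^{-1}(n))) = A(n)$, and the symmetric identity handles the other composition. In particular, by Lemma \ref{vivaldi}, both $F$ and $(f^{-1})^*$ actually send $2^\omega$ into $2^\omega$, so $F$ is a set-theoretic bijection of $2^\omega$.

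Next, for continuity, I would reuse the formula already computed in the proof of Theorem \ref{bioquant}: for each $\sigma \in 2^{<\omega}$,
\[
F^{-1}([\sigma]) = [\{\la f(n),\sigma(n)\ra : n<|\sigma|\}],
\]
which is a basic clopen set---the partial function inside is well-defined precisely because $f$ is injective, so the coordinates $f(0),\ldots,f(|\sigma|-1)$ do not collide. Hence the preimage of every basic clopen set is clopen, so $F$ is continuous; running the same argument with $f^{-1}$ in place of $f$ gives continuity of $F^{-1}$, and we conclude that $F$ is a homeomorphism.

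Finally, for $\mu_p$-preservation, by Carath\'eodory's uniqueness theorem two Borel probability measures on $2^\omega$ that agree on the algebra of clopen cylinders must coincide, so it suffices to show $\mu_p(F^{-1}([\sigma])) = \mu_p([\sigma])$ for every $\sigma \in 2^{<\omega}$. Using the clopen description above together with the fact that the coordinates of a $\mu_p$-distributed $X$ are independent Bernoulli$(p)$ variables and that the $f(n)$ are pairwise distinct, we obtain
\[
\mu_p(F^{-1}([\sigma])) = \prod_{n<|\sigma|} p^{\sigma(n)}(1-p)^{1-\sigma(n)} = \mu_p([\sigma]),
\]
as required. There is no real obstacle here; the statement is essentially a sanity check confirming that Theorem \ref{main} applies to its intended class of maps, and the only non-trivial ingredient is the standard extension of a clopen-measure identity to the full Borel $\sigma$-algebra.
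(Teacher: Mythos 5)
The paper states this proposition without proof, so there is no argument to compare against; your write-up correctly supplies the missing details. The three steps (bijectivity via $(f^{-1})^*$ as two-sided inverse, continuity of both $F$ and $F^{-1}$ via the clopen preimage formula already appearing in the proof of Theorem \ref{bioquant}, and measure-preservation on cylinders extended by uniqueness of Borel measures agreeing on the clopen algebra) are all sound, and your observation that injectivity of $f$ is what makes the finite partial function $\{\la f(n),\sigma(n)\ra : n<|\sigma|\}$ well-defined and the factors in the product independent is exactly the point that needs to be said.
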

		\begin{pro}
			There exist homeomorphisms of $2^\omega$ that are $\mu_p$-preserving for each $p$, but are not induced by a permutation.
		\end{pro}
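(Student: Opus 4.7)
The plan is to exhibit an explicit counterexample: a homeomorphism built by applying different coordinate permutations on different clopen pieces of $2^\omega$, arranged so that every $\mu_p$ is preserved, but so that the map does not act coordinate-wise. A natural candidate I would use is the involution $\varphi:2^\omega\to 2^\omega$ that acts as the identity on $[1]$ and swaps coordinates $1$ and $2$ on $[0]$, namely
\[
\varphi(0\,x_1\,x_2\,x_3\cdots) = 0\,x_2\,x_1\,x_3\cdots,\qquad \varphi(1\,x_1\,x_2\cdots) = 1\,x_1\,x_2\cdots.
\]

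First I would verify the easy topological points. The map $\varphi$ is an involution, each output bit depends only on finitely many input bits (bits $0,1,2$ for positions $1,2$, and the single input bit otherwise), so $\varphi$ is continuous. Being a continuous involution on a compact Hausdorff space, it is a homeomorphism.

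Next I would check $\mu_p$-preservation by computing preimages of basic clopens. One has $\varphi^{-1}([1\sigma]) = [1\sigma]$ for every finite $\sigma$, and $\varphi^{-1}([0\,b_1\,b_2\,\sigma]) = [0\,b_2\,b_1\,\sigma]$ for any $b_1,b_2\in 2$ and finite $\sigma$. In both cases the preimage is a cylinder of the same length containing the same numbers of $0$'s and $1$'s as the original, so its $\mu_p$-measure agrees with that of the original for every $p\in[0,1]$. Since cylinders generate the Borel $\sigma$-algebra, $\varphi$ preserves $\mu_p$ for all $p$.

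Finally I would show that $\varphi$ is not induced by any permutation. If $\varphi = f^*$ for some bijection $f:\omega\to\omega$, then $\varphi(x)(n) = x(f(n))$ would depend on a single coordinate of $x$ for each fixed $n$. But $\varphi(x)(1)$ equals $x(2)$ when $x(0)=0$ and equals $x(1)$ when $x(0)=1$, so it is not a function of any single coordinate of $x$, a contradiction. There is no real obstacle here; the only point to get right is that the local swap preserves every $\mu_p$ simultaneously, and this holds because permuting a fixed finite block of coordinates preserves every i.i.d.\ product measure, a property that persists when such local permutations are glued along a clopen partition recording which cylinder $x$ lies in.
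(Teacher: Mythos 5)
Your proof is correct and takes essentially the same approach as the paper: both exhibit a homeomorphism acting on the first three coordinates by a Hamming-weight-preserving permutation of $\{0,1\}^3$ that is not a coordinate permutation, with the identity on the remaining coordinates. The paper's concrete map ($100\mapsto 001$, $101\mapsto 101$, $110\mapsto 110$, $111\mapsto 111$, extended weight-preservingly on $[0]$) differs from yours only in which permutation of $\{0,1\}^3$ is chosen; your framing as a coordinate swap glued along the clopen partition $\{[0],[1]\}$ makes the simultaneous preservation of every $\mu_p$ especially transparent, whereas the paper reaches the same conclusion by observing that $\mu_p$ of a length-$3$ cylinder depends only on its Hamming weight.
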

		\begin{proof}
			Map
			\[
				[1]\mapsto [111]\cup [001]\cup [101]\cup [110]
			\]
			(more generally, any collection of cylinders of strings of length 3 including 2 strings of Hamming weight 2 and 1 of Hamming weight 1).
	
			Another way to express this is that the homeomorphism preserves the fraction of 1s in a certain sense.

			More precisely,
			\begin{eqnarray*}
				100 \mapsto 001,\\
				101 \mapsto 101,\\
				110 \mapsto 110,\\
				111 \mapsto 111.
			\end{eqnarray*}
		\end{proof}

		\begin{thm}
			Suppose $\varphi$ is a homeomorphism of $2^\omega$ which is $\mu_p$-preserving for all $p$
			(it suffices to require this for infinitely many $p$, or for a single transcendental $p$).
			Suppose $\varphi$ induces an automorphism $\pi$ of the Turing degrees. Then $\pi = \mathrm{id}$.
		\end{thm}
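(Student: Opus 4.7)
The plan is to mirror the proof of Theorem~\ref{main}, with the permutation $f$ replaced by the arbitrary homeomorphism $\varphi$, treated as an oracle in all randomness notions. First I would dispose of the ``infinitely many $p$ / one transcendental $p$'' remark: for any $\sigma\in 2^{<\omega}$, $\varphi^{-1}[\sigma]$ is clopen (since $\varphi$ is a homeomorphism of $2^\omega$), hence a finite disjoint union of cylinders, so $\mu_p(\varphi^{-1}[\sigma])$ is a polynomial in $p$ with integer coefficients. The identity $\mu_p(\varphi^{-1}[\sigma])=\mu_p([\sigma])$ is therefore a polynomial equation in $p$, and if it holds for infinitely many $p$, or for a single transcendental $p$, it holds for every $p\in[0,1]$. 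Thus we may assume $\varphi$ preserves every Bernoulli measure.

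Next I would relativize Theorem~\ref{bioquant}: if $B$ is $\mu_p$-ML-random relative to $\varphi$ and $A=\varphi^{-1}(B)$, then $A$ is also $\mu_p$-ML-random relative to $\varphi$. The proof copies the calculation for $f^*$: given a $\mu_p$-$\varphi$-ML test $\{U_n\}$ with $A\in\bigcap_n U_n$, the preimages $V_n=\varphi^{-1}(U_n)$ form a $\mu_p$-$\varphi$-ML test with $B\in\bigcap_n V_n$, because each clopen set $\varphi^{-1}[\sigma_k]$ is a finite union of cylinders uniformly computable from $\varphi$ and $\sigma_k$, and $\mu_p(V_n)=\mu_p(U_n)$ by measure-preservation. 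Now pick any $B$ that is $\mu_p$-random relative to $\varphi$, a $\mu_p$-measure-one event, and set $A=\varphi^{-1}(B)$. By Theorem~\ref{Claim1}, $p\le_{\mathrm T} A$. Since $\pi$ is an order automorphism sending $[A]$ to $[\varphi(A)]=[B]$, we obtain $\varphi(p)\le_{\mathrm T} B$. Thus $\mu_p$-almost every $B$ computes $\varphi(p)$, so Theorem~\ref{Claim3} yields $\varphi(p)\le_{\mathrm T} p$. Applying the same argument to $\varphi^{-1}$, which also preserves every $\mu_p$ and induces $\pi^{-1}$, gives the reverse inequality and hence $\pi(\mathbf p)=\mathbf p$.

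The main obstacle is pinning down the relativization of Theorem~\ref{bioquant} for a general homeomorphism: one must check that $\varphi^{-1}$ applied to a cylinder is not only clopen but \emph{uniformly} $\varphi$-computable as a finite disjoint union of cylinders, so that a $\mu_p$-$\varphi$-ML test is transported to another $\mu_p$-$\varphi$-ML test. This is built into any reasonable coding of a homeomorphism of $2^\omega$ as a real via its action on the clopen algebra, but it deserves to be spelled out. Once that uniformity is in hand, every remaining step — the polynomial-identity reduction, the invocations of Theorems~\ref{Claim1} and~\ref{Claim3}, and the step where the order automorphism transports $p\le_{\mathrm T} A$ to $\varphi(p)\le_{\mathrm T}\varphi(A)$ — is either already established or a routine oracle relativization.
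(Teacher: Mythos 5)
Your proof is correct and follows exactly the route the paper intends: the paper explicitly omits this proof, saying only that it ``follows along the same lines as before,'' and you have faithfully mirrored the proof of Theorem~\ref{main}, replacing the pullback $f^*$ by the general $\mu_p$-preserving homeomorphism $\varphi$ and relativizing Theorems~\ref{bioquant}, \ref{Claim1}, and~\ref{Claim3} to $\varphi$. Your polynomial-identity argument also cleanly fills in the paper's unexplained parenthetical remark, since $\mu_p(\varphi^{-1}[\sigma])-\mu_p([\sigma])$ is a polynomial in $p$ with integer coefficients and therefore vanishes identically once it vanishes at infinitely many $p$ or at a single transcendental $p$.
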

		We omit the proof which follows along the same lines as before.
	\newpage
		\bibliography{Hippocratic-rigidity-arXiv}
		\bibliographystyle{plain}
\end{document}